\theoremstyle{plain}
\newtheorem{theorem}{Theorem}[section]
\newtheorem{lemma}[theorem]{Lemma}
\newtheorem{proposition}{Proposition}[section]
\theoremstyle{definition}
\theoremstyle{remark}
\DeclareMathOperator{\co}{co}
\DeclareMathOperator{\cl}{cl}
\DeclareMathOperator{\sign}{sign}
\author{M.V. Dolgopolik and A.L. Fradkov}
\title{Speed-Gradient Control of the Brockett Integrator}
\begin{document}

\maketitle

\begin{abstract}
A nonsmooth extension of the Speed-Gradient (SG) algorithms in finite form is proposed. The conditions ensuring control
goal (convergence of the goal function to zero) are established. A new algorithm is applied to almost global
stabilization of the Brockett integrator that has become a popular benchmark for nonsmooth and discontinuous
algorithms. It is proved that the designed control law stabilizes the Brockett integrator for any initial point that
does not lie on the x3-axis. Besides, it is shown that Speed-Gradient algorithm ensures stabilization with arbitrarily
small control level. An important feature of the proposed control is the fact that it is continuous along trajectories
of the closed-loop system.
\end{abstract}

\section{Introduction}

The Brockett integrator or nonholonomic integrator 
\begin{equation}
\label{BI}
\dot x_1=u_1, \quad \dot x_2=u_2, \quad \dot x_3=x_1u_2-x_2u_1
\end{equation}
was introduced in the seminal paper \cite{Brockett}.  Since it was proved by Brockett that 
there exist no smooth time-invariant state feedback stabilizing (\ref{BI}) in the origin, it
has become a paradigmatic example of systems where smooth feedback fails. This results was further extended to a large
class of discontinuous state feedbacks by Ryan \cite{Ryan}. It also was, in a sense, the starting point of application
of differential geometry, Lie groups and Lie algebra methods to nonlinear control that lead
to the creation of what is now known under the name of geometric control theory. The development of the geometric
control theory started in the 1970s \cite{Brockett_GeomCont}. By now, it has grown into a mature area with strong
machinery \cite{Bonnard,Agrachev,Bullo,Stefani}. Since Brockett integrator is beautiful and seemingly simple system, it
has become a benchmark example for nonlinear control methods.  

During several decades many authors have been making efforts to apply their approaches for control of the Brockett
integrator. New algorithms for (\ref{BI}) were designed via the invariant manifold technique in \cite{Khennouf} and via 
discontinuous transformations in \cite{Astolfi}. The sliding mode control was applied in \cite{Bloch}.  
A family of discontinuous control laws was derived in \cite{Astolfi2}.
A ``sample-and-hold'' approach based on nonsmooth control Lyapunov functions was proposed in \cite{Clarke}.
Logic based switching was applied in \cite{Morse}. Methods of optimal control theory was utilised in \cite{Vdovin}. 
A hybrid control law was designed in \cite{Prieur, Prieur2}. An impulsive control was studied in \cite{Kovalev}. An
interesting general approach, using isospectral flows, to the stabilization of a class of nonholonomic systems that
includes the Brockett integrator was developed in \cite{BlochDrakunovKinyon}

In this paper we make an attempt to control the Brockett integrator by means of the Speed-Gradient method.
The Speed-Gradient (SG) method was proposed in the end of the 1970s as a general framework for design of control,
adaptation, identification algorithms for nonlinear systems \cite{F79}. Since then it was extended in different
directions \cite{F86,FMN99} and applied to a variety of problems in physics and mechanics
\cite{FS92,FP96,Selivanov12}. An intimate relation between applicability of SG-method and passivity of
controlled system was established \cite{FGHP95}. In the special case of affine controlled system the SG-algorithms
encompass Jurdjevic-Quinn (LgV) algorithms \cite{JQ78}.

Standard procedure of SG-algorithms derivation requires differentiation of the goal function along trajectories of the
controlled system. However, in many cases the right hand sides of the system model are nonsmooth. Sometimes it may be
profitable to introduce nonsmooth and even discontinuous terms into control algorithms in order to provide the desired
system dynamics, e.g. finite time convergence. Therefore there is a need for a more general framework for design and
analysis of SG-like algorithms in a general nonsmooth case. A first extension of SG-methods to nonsmooth case has been
made in \cite{DF}. It should be noted that the assumptions on the controlled system and the goal function that we
use in this article (see~Theorem~\ref{Thm_LinCase} below) are different from the ones in \cite{DF}. Furthermore, in
\cite{DF} the nonsmooth Speed-Gradient algorithm was applied only to a linear controlled system, while the main goal of
the present article is to apply this algorithm to the stabilization problem for the Brockett integrator.

In Section~\ref{NonSm_SGA} of this paper we introduce a further result on stabilization ability of nonsmooth
pseudogradient methods. In Section~\ref{Section_BI} a nonsmooth SG-algorithm for stabilization of (\ref{BI}) is derived,
and stability conditions are established. As a further application of nonsmooth Speed-Gradient methods, we consider
the energy control problem for a vibrating string in Section~\ref{Section_VibrString}. Necessary preliminary material is
presented in Section~\ref{Prelim}.

\section{Preliminaries}
\label{Prelim}

In this section, we recall some notions from nonsmooth analysis \cite{Demyanov} that are used in the sequel. Denote by
$|\cdot|$ the Euclidean norm in $\mathbb{R}^n$, and denote $\mathbb{R}_+ = [0, + \infty)$.

Let a real-valued function $f$ be defined in a neighbourhood of a point $x \in \mathbb{R}^n$. The function $f$ is
called \textit{Hadamard directionally differentiable} at the point $x$ if for any $h \in \mathbb{R}^n$ there exists 
the finite limit
$$
  f'(x; h) = \lim_{[\alpha, h'] \to [+0, h]} \frac{f(x + \alpha h') - f(x)}{\alpha}.
$$
The function $f'(x; \cdot)$ is called the \textit{Hadamard directional derivative} of $f$ at $x$. Note that
there exists an elaborate calculus of Hadamard directional derivatives \cite{Demyanov}. Observe also that if $n = 1$,
then the quantity $f'(x; 1)$ coincides with the right-hand side derivative of $f$ at $x$ that is denoted by
$D_+ f(x)$. 

Recall that the function $f$ is referred to as \textit{Hadamard superdifferentiable} at the point $x$, if $f$ is
Hadamard directionally differentiable at this point, and there exists a convex compact set 
$\overline{\partial} f(x) \subset \mathbb{R}^n$ such that
$$
  f'(x; h) = \min_{v \in \overline{\partial}f(x)} v^T h \quad \forall h \in \mathbb{R}^n.
$$
The set $\overline{\partial} f(x)$ is called the (Hadamard) \textit{superdifferential} of $f$ at $x$. The most common
example of a Hadamard superdifferentiable function is the composition of a concave function and a differentiable vector
function.

It is easy to see that if functions $f_i$, $i \in I = \{ 1, \ldots, k \}$ are Hadamard superdifferentiable at a point 
$x \in \mathbb{R}^n$, then for any $\alpha_i \ge 0$, $i \in I$ the functions $\sum_{i \in I} \alpha_i f_i$ and 
$\min_{i \in I} f_i$ are Hadamard superdifferentiable at $x$ as well. Note that any semiconcave function \cite{Clarke}
is Hadamard superdifferentiable.

\section{Nonsmooth Speed-Gradient Algorithm}
\label{NonSm_SGA}

Consider the controlled system
\begin{equation} \label{ContrSys}
  \dot{x} = F(x, u, t), \quad t \ge 0, 
\end{equation}
where $x \in \mathbb{R}^n$ is the vector of the system state, and $u \in \mathbb{R}^m$ is the control. We assume that
the function $F \colon \mathbb{R}^n \times \mathbb{R}^m \times \mathbb{R}_+ \to \mathbb{R}^n$ is continuous. A solution
of (\ref{ContrSys}), even in the case of a discontinuous control law, is understood to be an absolutely continuous
function satisfying (\ref{ContrSys}) for almost all $t$ in its domain.

We pose the general control problem as finding the control law 
$$
  u(t) = U\{ x(s), u(s) \colon 0 \le s \le t \}
$$
which ensures the control objective 
$$
  Q(x(t), t) \le \Delta \text{ when } t \ge t_*,
$$
where $Q(x, t)$ is a nonnegative \textit{goal function} defined on $\mathbb{R}^n \times \mathbb{R}_+$, $\Delta \ge 0$ is
some pre-specified threshold, and $t^*$ is the time instant at which the control objective is achieved. The objective
can be formulated also as 
$$
  \limsup_{t \to \infty} Q(x(t), t) \le \Delta,
$$
which does not specify the value of $t^*$. In the special case $\Delta = 0$ the control objective takes the form
\begin{equation} \label{ContrGoal}
  \lim_{t \to \infty} Q(x(t), t)) = 0,
\end{equation}
i.e. the objective is to stabilize the system (\ref{ContrSys}) with respect to the goal function $Q$.

The formulation of the control problem that we use encompasses various control problems, such as partial stabilization,
control of system energy, identification and adaptive control (see the discussion, as well as various examples and
applications, in \cite{FMN99}). In particular, if one takes a control Lyapunov function $V(x)$ of the system
(\ref{ContrSys}) as the goal function $Q(x, t)$, then the goal (\ref{ContrGoal}) is closely related to asymptotic
stability of (\ref{ContrSys}). However, we underline that possible goal functions $Q(x, t)$ are neither
exhausted by nor reduced to control Lyapunov functions (see~Section~\ref{Section_VibrString} below and examples in
\cite{FMN99}).

In order to design a control algorithm suppose that the function $Q$ is locally Lipschitz continuous and Hadamard
directionally differentiable. Choose a function $\omega(x, u, t)$ of the form 
$$
  \omega(x, u, t) = g(x, t)^T u,
$$
where $g(x, t) \colon \mathbb{R}^n \times \mathbb{R}^+ \to \mathbb{R}^m$ is a given function such that
\begin{equation} \label{InequalForOmega}
  Q'(x, t; F(x, u, t), 1 ) \le \omega(x, u, t) \quad 
  \forall x \in \mathbb{R}^n, u \in \mathbb{R}^m, t \in \mathbb{R}_+.
\end{equation}
Note that $g(x, t) = \nabla_u \omega(x, u, t)$, where $\nabla_u \omega(x, u, t)$ is the gradient of the function
$u \to \omega(x, u, t)$.

\noindent{\em Remark~1}.~The existence of a function $\omega(x, u, t)$ of the form $\omega(x, u, t) = g(x, t)^T u$ that
satisfies (\ref{InequalForOmega}) is the basic assumption on the system (\ref{ContrSys}) and the goal function $Q(x, t)$
that we implicitly make throughout this article. This assumption is valid, in particular, in the case when the function
$F$ is linear in $u$ (i.e. when it has the form $F(x, u, t) = f(x, t) u$), and the function $Q$ is Hadamard
superdifferentiable and does not depend on $t$. Indeed, in this case one can define $\omega(x, u, t) = v(x)^T f(x, t) u$
for any function $v(x)$ such that $v(x) \in \overline{\partial} Q(x)$ for all $x \in \mathbb{R}^n$.

Take the control algorithm in the form 
$$
  u = - \Gamma g(x, t),
$$
where $\Gamma$ is a positive definite gain matrix. We will also consider a more general control algorithm
\begin{equation} \label{PseudoSG}
  u = \gamma \psi(x, u, t) 
\end{equation}
where $\gamma > 0$ is a scalar gain, and the vector function $\psi$ satisfies the ``acute angle'' condition:
$g(x, t)^T \psi(x, u, t) \le 0$ for any $x \in \mathbb{R}^n$, $u \in \mathbb{R}^m$ and $t \in \mathbb{R}_+$.
The algorithm of the form (\ref{PseudoSG}) is a generalization of the so-called Speed-Pseudo\-gradient algorithms (see
\cite{FMN99}). Furthermore, if one takes a control Lyapunov function $V(x)$ of the system (\ref{ContrSys}) as the goal
function $Q(x, t)$, then the control law (\ref{PseudoSG}) is a feedback of steepest descent type for $V$
(see~\cite{Clarke}). However, since, as it was mentioned above, the function $Q(x, t)$ need not be a control Lyapunov
function, the control algorithm (\ref{PseudoSG}) is not reduced to a feedback of steepest descent type in the
general case.

It should be noted that (\ref{PseudoSG}) is an \textit{equation} with respect to the control variable
$u$; in other words, the equality (\ref{PseudoSG}) defines the control law $u = u(x, t, \gamma)$ \textit{implicitly}.
Therefore, in order to implement the algorithm of the form (\ref{PseudoSG}) one should be able to efficiently solve
this equation, i.e. one should be able either to obtain an explicit expression for $u(x, t, \gamma)$ or to efficiently
solve this equation numerically. However, it should be noted that in many applications either the function $\psi$ does
not depend on $u$ (see examples below) or a solution of (\ref{PseudoSG}) can be found analytically.

Let us discuss the performance of the control systems with the proposed control algorithm (\ref{PseudoSG}). 
The following theorem holds true.

\begin{theorem} \label{Thm_LinCase}
Let $C \subset \mathbb{R}^n$ be a given set, and the following assumptions be valid:
\begin{enumerate}
\item{for any $\gamma > 0$, $x \in \mathbb{R}^n$ and $t \ge 0$ there exists a solution $u = \kappa(x, t, \gamma)$
of equation (\ref{PseudoSG}), and the function $\kappa$ is locally bounded in $x$ uniformly in $t$;}

\item{an absolutely continuous solution of the system (\ref{ContrSys}), (\ref{PseudoSG}) exists for all $t \ge 0$ and 
$x(0) \in \mathbb{R}^n \setminus C$, and $x(t) \notin C$ for any $t \in \mathbb{R}_+$;}

\item{the function $Q(x, t)$ is radially unbounded, i.e.
$$
  \inf_{t \ge 0} Q(x, t) \to \infty \quad \text{as} \quad |x| \to \infty,
$$
and nonnegative;
}
\item{for any $\Delta > 0$ and $r > 0$ there exists $a > 0$ such that $|g(x, t)| \ge a$ for all 
$x \in \mathbb{R}^n \setminus C$ and $t \in \mathbb{R}_+$ such that $Q(x, t) \ge \Delta$ and $|x| \le r$;
}
\item{there exists a continuous function $\rho \colon \mathbb{R}_+ \to \mathbb{R}_+$ such that $\rho(s) = 0$ if and
only if $s = 0$, and $g(x, t)^T \psi(x, u, t) \le - \rho( | g(x, t) | )$ for all $x \in \mathbb{R}^n$, 
$u \in \mathbb{R}^m$ and $t \in \mathbb{R}_+$.
}
\end{enumerate}

Then for any $x(0) \in \mathbb{R}^n \setminus C$ and $\gamma > 0$ a solution of (\ref{ContrSys}), (\ref{PseudoSG}) is
bounded on $\mathbb{R}_+$ and the control goal (\ref{ContrGoal}) is achieved, i.e.
$$
  \lim_{t \to \infty} Q(x(t), t)) = 0.
$$
\end{theorem}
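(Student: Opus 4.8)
The plan is to treat $t \mapsto Q(x(t),t)$ as a Lyapunov-type function and combine a chain rule for Hadamard differentiable functions with a Barbalat-style integral argument. First I would establish, along any solution $x(\cdot)$, the chain rule
$$\frac{d}{dt} Q(x(t),t) = Q'\big(x(t),t; \dot x(t),1\big) \quad \text{for a.e. } t \ge 0.$$
This is exactly the point where \emph{Hadamard} (rather than mere) directional differentiability is essential: writing $x(t+\tau)-x(t)=\tau(\dot x(t)+e(\tau))$ with $e(\tau)\to 0$ at a point of differentiability of the absolutely continuous $x$, the difference quotient equals $[Q((x(t),t)+\tau(\dot x(t)+e(\tau),1))-Q(x(t),t)]/\tau$, and the Hadamard limit, taken jointly over the step $\alpha=\tau\to+0$ and the varying direction $(\dot x(t)+e(\tau),1)\to(\dot x(t),1)$, delivers precisely $Q'(x(t),t;\dot x(t),1)$. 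Substituting $\dot x(t)=F(x(t),u(t),t)$ with the closed-loop control $u(t)=\gamma\psi(x(t),u(t),t)$ and invoking (\ref{InequalForOmega}) together with assumption~5 then gives, for a.e. $t$,
$$\frac{d}{dt}Q(x(t),t)\le \omega(x(t),u(t),t)=\gamma\, g(x(t),t)^T\psi(x(t),u(t),t)\le -\gamma\,\rho\big(|g(x(t),t)|\big)\le 0.$$

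Since $Q$ is locally Lipschitz and $x$ is absolutely continuous, $t\mapsto Q(x(t),t)$ is itself absolutely continuous, hence equals the integral of its a.e. derivative; the previous display therefore shows it is non-increasing. Thus $Q(x(t),t)\le Q(x(0),0)$ for all $t$, and radial unboundedness (assumption~3) forces $|x(t)|\le r$ for some $r>0$ and all $t\ge 0$, since otherwise $\inf_s Q(x(t_n),s)\to\infty$ along a sequence with $|x(t_n)|\to\infty$ would contradict this bound. Being non-increasing and bounded below by $0$, the quantity $Q(x(t),t)$ converges to some $Q_\infty\ge 0$, and integrating the differential inequality over $[0,\infty)$ yields the Barbalat-type estimate
$$\gamma\int_0^\infty \rho\big(|g(x(t),t)|\big)\,dt \le Q(x(0),0)-Q_\infty <\infty.$$

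Finally I would argue by contradiction that $Q_\infty=0$. If $Q_\infty=\Delta>0$, then $Q(x(t),t)\ge\Delta$ for all $t$ by monotonicity, and since $x(t)\in\mathbb{R}^n\setminus C$ with $|x(t)|\le r$ (assumption~2), assumption~4 produces $a>0$ with $|g(x(t),t)|\ge a$ for all $t$. To convert the finite integral into a contradiction one must bound $\rho(|g(x(t),t)|)$ below by a positive constant, and this is the step I expect to be the main obstacle: because $\rho$ is only assumed continuous with $\rho(s)=0\iff s=0$ and may decay at infinity, the lower bound $|g|\ge a$ alone does not suffice — one also needs $|g(x(t),t)|$ to remain in a bounded set so that $\rho$ attains a strictly positive minimum $\beta$ on the resulting compact interval $[a,b]$. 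Under the natural standing regularity that $g$ is continuous and locally bounded in $x$ uniformly in $t$, which is automatic when $g$ is $t$-independent, as in the Brockett application of Section~\ref{Section_BI}, the bound $|x(t)|\le r$ supplies such a $b$; then $\rho(|g(x(t),t)|)\ge\beta>0$ for all $t$, so $\int_0^\infty\rho(|g|)\,dt=\infty$, contradicting the Barbalat estimate. Hence $Q_\infty=0$, which together with $|x(t)|\le r$ gives both boundedness of the solution on $\mathbb{R}_+$ and the control goal (\ref{ContrGoal}).
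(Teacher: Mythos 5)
Your proposal is correct in substance and follows the same skeleton as the paper's proof: the Hadamard chain rule along an absolutely continuous solution (your inline derivation is exactly the content of the chain rule the paper cites from Demyanov) gives $D_+ V_0(t) \le \gamma\, g(x(t),t)^T \psi \le -\gamma \rho(|g(x(t),t)|) \le 0$ a.e.\ for $V_0(t)=Q(x(t),t)$, absolute continuity of $V_0$ then makes it non-increasing, and radial unboundedness bounds the trajectory. The only structural difference is the endgame: the paper notes that $T_\Delta=\{t\ge 0: Q(x(t),t)\ge\Delta\}$ is an interval (by monotonicity of $V_0$) on which $D_+V_0\le-\gamma\rho(a)$, hence $\sup T_\Delta \le V_0(0)/(\gamma\rho(a))$, which is an explicit bound on the time needed to enter $\{Q<\Delta\}$; you instead integrate the dissipation inequality and run a Barbalat-type contradiction against $\int_0^\infty\rho(|g(x(t),t)|)\,dt<\infty$. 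The two finishes are interchangeable, the paper's buying a quantitative convergence-time estimate and yours a slightly more modular argument. The more interesting point is the obstacle you flag: since $\rho$ is only assumed continuous with $\rho(s)=0$ iff $s=0$, the bound $|g|\ge a$ from assumption~4 does not by itself yield $\rho(|g|)\ge\beta>0$. You are right that this is a real issue, and in fact the paper's own proof steps over it silently: it infers $D_+V_0(t)\le-\gamma\rho(a)$ from $|g(x(t),t)|>a$, which is justified only if $\rho$ is non-decreasing or if $|g|$ stays in a compact set along the trajectory, neither of which is among the stated hypotheses. Your repair (continuity of $g$ and local boundedness in $x$ uniformly in $t$, giving $|g(x(t),t)|\in[a,b]$ and $\beta=\min_{[a,b]}\rho>0$) is an additional assumption, so strictly speaking you prove the theorem under a mild extra hypothesis, while the paper tacitly treats $\rho$ as monotone; both repairs are vacuous in the Brockett application of Section~\ref{Section_BI}, where $\rho(s)\equiv s$ and $g$ does not depend on $t$, and either fix could be incorporated into the statement of Theorem~\ref{Thm_LinCase} without affecting any application in the paper.
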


\begin{proof}
Let $x(t)$ be a solution of the system (\ref{ContrSys}), (\ref{PseudoSG}). Introduce the Lyapunov
function $V(x, t) = Q(x, t)$, and define $V_0(t) = V(x(t), t)$. The function $Q$ is Hadamard directionally
differentiable, and the function $x(t)$ is a.e. differentiable, as a solution of a differential equation. Therefore by
the chain rule for directional derivatives (Theorem~I.3.3, \cite{Demyanov}) for a.e. $t \in \mathbb{R}_+$ there exists
the right-hand side derivative $D_+ V_0(t)$ of the function $V_0$ that has the form
$$
  D_+ V_0(t) = Q'(x(t), t; F(x(t), u, t), 1) \le \omega( x(t), u, t).
$$
Applying assumption 5 and the fact that $\omega(x, u, t) = g(x, t)^T u$ one obtains that
\begin{equation} \label{LyapFunc_LinCase}
  D_+ V_0(t) \le \gamma g(x, t)^T \psi(x, u, t) \le - \gamma \rho( |g(x, t)| ) \le 0.
\end{equation}
Note that the function $V_0$ is absolutely continuous as the composition of the locally Lipschitz continuous function
$Q(x, t)$ and the absolutely continuous function $x(t)$. Hence taking into account (\ref{LyapFunc_LinCase}) one gets
that the function $V_0(t)$ is nonincreasing, which implies the boundedness of $x(t)$ due to the radial
unboundedness of $Q$, and the boundedness of the control $u$ due to the local boundedness in $x$ uniformly in $t$ of
the function $u = \kappa(x, t, \gamma)$.

Choose an arbitrary $\Delta > 0$, and denote $T_{\Delta} = \{ t \ge 0 \colon Q(x(t), t) \ge \Delta \}$. Observe that
the set $T_{\Delta}$ is connected due to the fact that the function $V_0(t) = Q(x(t), t)$ is nonincreasing. 
From assumption 4 it follows that there exists $a > 0$ such that $| g(x, t) | > a$ for all $t \in T_{\Delta}$.
Hence with the use of (\ref{LyapFunc_LinCase}) one gets that $D_+ V_0(t) \le - \gamma \rho(a) < 0$ for any 
$t \in T_{\Delta}$. Consequently, $\sup T_{\Delta} \le V_0(0) / \gamma \rho(a)$, and
$$
  V_0(t) = Q(x(t), t) < \Delta \quad \forall t > \sup T_{\Delta}.
$$
Hence and from the fact that $\Delta > 0$ is arbitrary it follows that (\ref{ContrGoal}) holds true.
\end{proof}

\noindent{\em Remark~2}. Let all assumptions of the theorem above be valid, and suppose that the function 
$\psi(x, u, t)$ is bounded for any $x \in \mathbb{R}^n$, $u \in \mathbb{R}^m$ and $t \in \mathbb{R}_+$. Then the control
goal (\ref{ContrGoal}) can be achieved for an arbitrarily small control input. Indeed, by choosing sufficiently small 
$\gamma > 0$, one can obtain that the inequality $|u| = \gamma |\psi(x, u, t)| < \varepsilon$ holds true for an
arbitrarily small prespecified $\varepsilon > 0$.

The following lemma allows one to slightly improve Theorem~\ref{Thm_LinCase}.

\begin{lemma}
Let assumptions 1--3 of Theorem~\ref{Thm_LinCase} hold true, and let 
$$
  g(x, t)^T \psi(x, u, t) \le 0 \quad \forall u \in \mathbb{R}^m
$$
for any $x \in \mathbb{R}^n$ and $t \in \mathbb{R}_+$ such that $Q(x, t) > 0$. Suppose that $x(t)$ is a solution of the
system (\ref{ContrSys}), (\ref{PseudoSG}) with $x(0) \in \mathbb{R}^n \setminus C$ such that $Q(x(T), T) = 0$ for some 
$T \ge 0$. Then $Q(x(t), t) = 0$ for all $t \ge T$.
\end{lemma}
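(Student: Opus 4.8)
The plan is to show that the value of the goal function along the trajectory cannot leave zero once it has reached it, by a continuity-and-integration argument built on the monotonicity estimate already used in the proof of Theorem~\ref{Thm_LinCase}. First I would set $V_0(t) = Q(x(t), t)$ and recall from that proof that $V_0$ is absolutely continuous (as the composition of the locally Lipschitz $Q$ with the absolutely continuous $x(t)$) and nonnegative (assumption~3), and that for a.e. $t \ge 0$ the chain rule for directional derivatives (Theorem~I.3.3, \cite{Demyanov}) together with (\ref{InequalForOmega}) yields
$$
  D_+ V_0(t) = Q'(x(t), t; F(x(t), u, t), 1) \le \omega(x(t), u, t) = \gamma\, g(x(t), t)^T \psi(x(t), u, t).
$$
The new ingredient is that the acute-angle bound $g^T \psi \le 0$ is now only assumed to hold where $Q > 0$, which is precisely where it will be needed.

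Next I would argue by contradiction: suppose $V_0(t_1) > 0$ for some $t_1 > T$. Since $V_0$ is continuous with $V_0(T) = 0$, the set of zeros of $V_0$ in $[T, t_1]$ is closed and nonempty, so I would let $t_0 = \sup\{ t \in [T, t_1] : V_0(t) = 0 \}$. By continuity $V_0(t_0) = 0$, and $V_0(t) > 0$, i.e. $Q(x(t), t) > 0$, for every $t \in (t_0, t_1]$. On this interval the hypothesis of the lemma applies, so $g(x(t), t)^T \psi(x(t), u, t) \le 0$, and the displayed estimate gives $D_+ V_0(t) \le 0$ for a.e. $t \in (t_0, t_1)$.

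Finally, since $V_0$ is absolutely continuous its ordinary derivative $\dot V_0$ exists a.e. and agrees with $D_+ V_0$ wherever the latter is finite, so $\dot V_0(t) \le 0$ for a.e. $t \in (t_0, t_1)$. The fundamental theorem of calculus for absolutely continuous functions then yields
$$
  V_0(t_1) = V_0(t_0) + \int_{t_0}^{t_1} \dot V_0(s)\, ds \le V_0(t_0) = 0,
$$
which contradicts $V_0(t_1) > 0$ since $V_0 \ge 0$. Hence $V_0(t) = 0$ for all $t \ge T$, which is the claim. I expect the only delicate points to be the clean extraction of the ``last zero'' $t_0$ via continuity and the identification of $D_+ V_0$ with the a.e. classical derivative entering the integral; both are routine once the absolute continuity of $V_0$ established in Theorem~\ref{Thm_LinCase} is invoked, so there is no substantial obstacle beyond bookkeeping.
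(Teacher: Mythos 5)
Your proof is correct and follows essentially the same route as the paper's: a contradiction argument built on the ``last zero'' $\sup\{t : Q(x(t),t)=0\}$, the chain-rule estimate $D_+ V_0 \le \gamma\, g^T\psi \le 0$ on the interval where $Q>0$, and the conclusion that $V_0$ cannot increase there. The only cosmetic difference is that you make explicit (via the fundamental theorem of calculus for absolutely continuous functions) the step the paper states directly as ``$V_0$ is nonincreasing on the interval.''
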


\begin{proof}
Arguing by reductio ad absurdum, suppose that there exists $t_0 > T$ such that $Q(x(t_0), t_0) > 0$.
Denote 
$$
  \tau = \sup\big\{ t \in [T, t_0] \colon Q(x(t), t) = 0 \big\}.
$$
Then $T \le \tau < t_0$, $Q(x(\tau), \tau) = 0$ and for any $t \in (\tau, t_0]$ one has $Q(x(t), t) > 0$. Hence for
a.e. $t \in (\tau, t_0]$ one has
$$
  D_+ V_0(t) = Q'(x(t), t; F(x(t), u, t), 1) \le \omega(x(t), u, t) := \gamma g(x, t)^T \psi(x, u, t) \le 0,
$$
where $V_0(t) = Q(x(t), t)$. Consequently, the function $V_0$ is nonincreasing on $[\tau, t_0]$. Therefore
$Q(x(t_0), t_0) \le Q(x(\tau), \tau) = 0$, which contradicts the definition of $t_0$.
\end{proof}

\noindent{\em Remark~3}.~From the lemma above it follows that Theorem~\ref{Thm_LinCase} holds true in the case when 
the inequality
$$
  Q'(x, t; F(x, u, t), 1) \le \omega(x, u, t) \quad \forall u \in \mathbb{R}^m
$$
(see~(\ref{InequalForOmega})) is satisfied only for all $x \in \mathbb{R}^n$ and $t \in \mathbb{R}_+$ such that 
$Q(x, t) > 0$. Indeed, if $Q(x(t), t) > 0$ for all $t \in \mathbb{R}_+$, then arguing in the same way as in the
proof of Theorem~\ref{Thm_LinCase} one obtains that $Q(x(t), t) \to 0$ as $t \to \infty$. On the other hand, if
$Q(x(T), T) = 0$ for some $T \ge 0$, then arguing in the same way as in the proof of the previous lemma one
gets that $Q(x(t), t) = 0$ for all $t \ge T$, which implies the desired result.

\section{Stabilization of the Brockett Integrator}
\label{Section_BI}

\subsection{Problem Formulation}

Let us apply the theory discussed above to the construction of an arbitrarily small stabilizing feedback control for
the Brockett integrator (\ref{BI}). Since there is no continuous feedback control that stabilizes this system
\cite{Brockett}, the standard Speed-Gradient algorithms cannot be applied in this case. That is why we utilise the
nonsmooth version of SG-algorithm developed in this paper.

As it was mentioned above, the Brockett integrator is expressed as (\ref{BI}). We impose the additional 
constraint on control $u_1^2 + u_2^2 \le \varepsilon$, where $\varepsilon > 0$ is arbitrary. Being inspired with 
the ideas of Clarke \cite{Clarke}, introduce the goal function $Q(x)$ as follows
$$
  Q(x) = \left(\sqrt{x_1^2 + x_2^2} - |x_3| \right)^2 + x_3^2 = 
  x_1^2 + x_2^2 + 2 x_3^2 - 2 |x_3| \sqrt{x_1^2 + x_2^2}.
$$
Note that the function $Q$ is radially unbounded, and $Q(x) = 0$ iff $x = 0$. It was shown in \cite{Clarke} that $Q$ is
a control Lyapunov function for the Brockett integrator.

\subsection{Feedback Construction}

Let us apply the algorithm (\ref{PseudoSG}) to the construction of a control law. For the sake of convenience,
denote $\sigma(x) = \sqrt{x_1^2 + x_2^2}$. The function $Q$ is locally Lipschitz continuous and Hadamard directionally
differentiable. Its directional derivative has the form
$$
  Q'(x; h) = 2 x_1 h_1 + 2 x_2 h_2 + 4 x_3 h_3
  - 2 |x_3| \frac{( x_1 h_1 + x_2 h_2 )}{\sigma(x)} - 2 \sign( x_3 ) \sigma(x) h_3	
$$
in the case $x_3 \ne 0$ and $\sigma(x) \ne 0$, and
$$
  Q'(x, h) = \begin{cases}
    2 x_2 h_1 + 2 x_2 h_2 - 2 |h_3| \sigma(x), \text{ if } x_3 = 0, \\
    4 x_3 h_3 - 2 |x_3| \sqrt{h_1^2 + h_2^2}, \text{ if } \sigma(x) = 0.
  \end{cases}
$$

Let $x_3 \ne 0$ and $\sigma(x) \ne 0$. Then the function $Q'(x; \cdot)$ is linear, i.e. $Q$ is differentiable.
Therefore in this case define
\begin{multline*}
  \omega(x, u) = Q'(x)^T F(x, u) = 2 x_1 u_1 + 2 x_2 u_2 + 4 x_3 (x_1 u_2 - x_2 u_1) \\ 
  - 2 |x_3| (x_1 u_1 + x_2 u_2) / \sigma(x) - 2 \sign(x_3) \sigma(x) (x_1 u_2 - x_2 u_1),
\end{multline*}
where $F(x, u)$ is the right-hand side of (\ref{BI}).

Let now $x_3 = 0$. Then $Q$ is Hadamard superdifferentiable, and its superdifferential has the form
$$
  \overline{\partial} Q(x) = \co\{ (2 x_1, 2 x_2, 2 \sigma(x))^T, (2 x_1, 2 x_2, -2 \sigma(x))^T \},
$$
where ``$\co$'' stands for the convex hull. Note that $(2 x_1, 2 x_2, 0) \in \overline{\partial} Q(x)$, and define
\begin{equation} \label{BI_omega2}
  \omega(x, u) = (2 x_1, 2 x_2, 0)^T F(x, u) = 2 x_1 u_1 + 2 x_2 u_2 \ge Q'(x; F(x, u)).
\end{equation}
Finally, let $\sigma(x) = 0$. Then $Q$ is also Hadamard superdifferentiable and
$$
  \overline{\partial} Q(x) = \big\{ ( - 2 |x_3| v_1, - 2 |x_3| v_2, 4 x_3 )^T \colon |(v_1, v_2)| \le 1 \big\}.
$$
Therefore, choose $v = (v_1, v_2) \in \mathbb{R}^2$ such that $|v| = 1$, and define
\begin{multline} \label{BI_omega3}
  \omega(x, u) = ( - 2 |x_3| v_1, - 2 |x_3| v_2, 4 x_3 )^T F(x, u) \\
  = - 2 |x_3| (v_1 u_1 + v_2 u_2) \ge Q'(x; F(x, u))
\end{multline}
(recall that $\sigma(x) = 0$, i.e. $x_1 = x_2 = 0$). Note that the choice of $v$ can depend on $x_3$, i.e. one can
choose $v = v(x_3)$.

Define the control law as follows
\begin{equation} \label{GenContrL}
  u(x) = \gamma \psi(x), \quad \psi(x) = - |\nabla_u \omega(x, u)|^{-1} \nabla_u \omega(x, u).
\end{equation}
Thus, the control has the form 
\begin{equation} \label{ControlLaw}
  u(x) = \begin{cases}
    0, \text{ if } x = 0, \\
    - \gamma \sigma(x)^{-1} (x_1, x_2)^T, \text{ if } x_3 = 0, \sigma(x) \ne 0 \\
    \gamma v(x_3), \text{ if } \sigma(x) = 0, x_3 \ne 0. \\
    - \gamma |\nabla_u \omega(x, u)|^{-1} \nabla_u \omega(x, u), \text{ if } \sigma(x) \ne 0, x_3 \ne 0, \\
  \end{cases}
\end{equation}
where $\nabla_u \omega(x, u) = (\partial \omega / \partial u_1, \partial \omega / \partial u_2)$ and
\begin{gather} \label{PartDer1}
  \frac{\partial \omega}{\partial u_1} (x, u) = 2 x_1 - 4 x_2 x_3 - 
  2 \frac{|x_3| x_1}{\sigma(x)} + 2\sign(x_3) x_2 \sigma(x) \\
  \frac{\partial \omega}{\partial u_2} (x, u) = 2 x_2 + 4 x_1 x_3 - 
  2 \frac{|x_3| x_2}{\sigma(x)} - 2 \sign(x_3) x_1 \sigma(x). \label{PartDer2}
\end{gather}
in the case $\sigma(x) \ne 0$ and $x_3 \ne 0$.

Let us show that $|\nabla_u \omega(x, u)| \ne 0$ for any $x$ such that $\sigma(x) \ne 0$ and $x_3 \ne 0$. Indeed,
multiplying by $\sigma(x)$ in (\ref{PartDer1}) one gets that for any $x$ such that $x_1 \ne 0$ and $x_3 \ne 0$ 
the following holds true
$$
  \sigma(x) \frac{\partial \omega}{\partial u_1} (x, u) = 2 \sign(x_3) x_2 \sigma^2(x)
  + (2 x_1 - 4 x_2 x_3) \sigma(x) - 2 |x_3| x_1 \ne 0,
$$
since the discriminant of the quadratic equation
$$
  l(s) = 2 \sign(x_3) x_2 s^2 + (2 x_1 - 4 x_2 x_3) s - 2 |x_3| x_1 = 0
$$
has the form $D = 4 x_1^2 + 16 x_2^2 x_3^2 > 0$. Analogously, $\partial \omega / \partial u_2 \ne 0$ for all $x$ such
that $x_2 \ne 0$ and $x_3 \ne 0$. Thus, $|\nabla_u \omega(x, u)| \ne 0$ for any $x \in \mathbb{R}^3$ such that 
$x_3 \ne 0$ and $\sigma(x) \ne 0$. Hence the control law (\ref{ControlLaw}) is corretly defined.

\noindent{\em Remark~4}.~{(i) Observe that in the case $\sigma(x) \ne 0$ and $x_3 \ne 0$, the set of limit points of
$u(x)$ (see (\ref{ControlLaw})--(\ref{PartDer2})) as $\sigma(x) \to 0$ is the circle of radius $\gamma$ centred at
the origin. Interestingly, according to the algorithm one defines $u(x)$, when $\sigma(x) = 0$, as an arbitrary element
of this circle.
}

\noindent{(ii) From the definition it follows that the control law (\ref{ControlLaw}) is a feedback of steepest descent
type for the control Lyapunov function $Q(x)$. However, we want to point out that we understand a solution of
a differential equation in the classical sense as opposed to the sample-and-hold sense in \cite{Clarke}, where a
different discontinuous stabilizing feedback of steepest descent type for the Brockett integrator was constructed.
Moreover, we will show that unlike the feedback controller proposed in \cite{Clarke}, the control law (\ref{ControlLaw})
is continuous along solutions of the closed-loop system for almost all initial points. 
}

\noindent{(iii) Note that the state feedback (\ref{ControlLaw}) is not upper semicontinuous as a set-valued mapping.
Hence it does not fall into the class of discontinuous state feedbacks \cite{Ryan} that do not stabilize Brockett
integrator.
}

\subsection{Properties of the Designed Control Law}

Let us verify that all assumption of Theorem~\ref{Thm_LinCase} hold true with 
$$
  C = \{ x \in \mathbb{R}^3 \colon \sigma(x) = 0, x_3 \ne 0 \}.
$$
Then we can conclude that the control law (\ref{ControlLaw}) stabilizes the Brockett integrator for any $\gamma > 0$ and
any initial point $x(0)$ that does not belong to the $x_3$-axis. Moreover, by choosing $\gamma = \sqrt{\varepsilon}$ and
taking into account the fact that $|u(x)| = \gamma$ (see~(\ref{GenContrL})) one obtains that the proposed control
satisfies the constraint $u_1(x)^2 + u_2(x)^2 \le \varepsilon$, i.e. it can be made arbitrarily small. 

Clearly, assumptions 1 and 3 of Theorem~\ref{Thm_LinCase} are satisfied. Furthermore, from the fact that 
$$
  \psi(x) = - |\nabla_u \omega(x, u)|^{-1} \nabla_u \omega(x, u),
$$
it follows that assumption 5 is satisfied with $\rho(s) \equiv s$. 

\begin{proposition}
Assumption 4 of Theorem~\ref{Thm_LinCase} is satisfied in the example under consideration.
\end{proposition}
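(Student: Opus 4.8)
The plan is to establish the uniform lower bound on $g(x) = \nabla_u \omega(x)$ by a compactness-and-contradiction argument. Since the goal function $Q$ does not depend on $t$ here, Assumption~4 reduces to showing that for every $\Delta > 0$ and $r > 0$ the quantity $G(x) := |\nabla_u \omega(x)|$ is bounded away from zero on the set $S = \{ x \in \mathbb{R}^3 \setminus C \colon Q(x) \ge \Delta,\ |x| \le r \}$. The set $K = \{ x \colon Q(x) \ge \Delta,\ |x| \le r \}$ is compact because $Q$ is continuous, and $\overline{S} \subseteq K$ with $K \setminus S = K \cap C \subseteq C$. If the desired bound failed, there would be a sequence $x^{(k)} \in S$ with $G(x^{(k)}) \to 0$ and, passing to a subsequence, $x^{(k)} \to x^* \in K$. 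I would then contradict $G(x^{(k)}) \to 0$ by showing $\liminf_k G(x^{(k)}) > 0$ for every admissible location of $x^*$.

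First I would rule out $x^* = 0$: since $Q$ is continuous and $Q(x^{(k)}) \ge \Delta > 0$, one gets $Q(x^*) \ge \Delta$, hence $x^* \ne 0$. Next I would note that every point of $S$ lies in exactly one of two regimes — either $x_3 = 0$ with $\sigma(x) \ne 0$ (the origin being excluded by $Q > 0$), where $G(x) = 2 \sigma(x)$ by \eqref{BI_omega2}; or $x_3 \ne 0$ and $\sigma(x) \ne 0$ (the generic case), where $G$ is given by \eqref{PartDer1}--\eqref{PartDer2}. Accordingly the limit $x^*$ falls into one of three situations, treated separately.

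In the generic situation $\sigma(x^*) \ne 0$, $x_3^* \ne 0$, the function $G$ is continuous and, by the discriminant computation already carried out after \eqref{PartDer2}, strictly positive, so $G(x^{(k)}) \to G(x^*) > 0$. If $\sigma(x^*) = 0$, then $x^* \in C$ and $x_3^* \ne 0$ (as $x^* \ne 0$), and I would compute the limit of \eqref{PartDer1}--\eqref{PartDer2} as $\sigma \to 0$: writing $x_1 = \sigma \cos \theta$, $x_2 = \sigma \sin \theta$, the terms $2 x_1$, $-4 x_2 x_3$ and $2 \sign(x_3) x_2 \sigma$ vanish, only $-2 |x_3| x_1 / \sigma \to -2 |x_3^*| \cos \theta$ survives in the first component (and $-2 |x_3^*| \sin \theta$ in the second), giving $G(x^{(k)}) \to 2 |x_3^*| > 0$. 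Finally, if $x_3^* = 0$ but $\sigma(x^*) \ne 0$, I would bound $G$ below near $x^*$ directly: as $x_3 \to 0$ the singular term $|x_3| x_1 / \sigma$ vanishes while the jump term $\pm 2 \sign(x_3) x_2 \sigma$ persists, yet the cross terms cancel in $G^2$ regardless of the sign, yielding $G \to 2 \sigma(x^*) \sqrt{1 + \sigma(x^*)^2}$ from $x_3 \ne 0$ and $G = 2 \sigma(x^*)$ on $x_3 = 0$; both exceed $2 \sigma(x^*) > 0$, so $\liminf_k G(x^{(k)}) \ge 2 \sigma(x^*) > 0$. Each case contradicts $G(x^{(k)}) \to 0$, proving the claim.

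The main obstacle is the last situation: because of the $\sign(x_3)$ factor and the $|x_3| / \sigma$ term, $G$ is genuinely discontinuous across the surfaces $x_3 = 0$ and $\sigma = 0$, so a naive appeal to continuity on a compact set fails. The essential point is that this discontinuity is harmless for a lower bound, since the one-sided limits and the on-surface values all remain bounded away from zero; the case-by-case $\liminf$ computation is precisely what makes this rigorous.
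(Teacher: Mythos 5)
Your proof is correct and follows essentially the same route as the paper: a compactness-and-contradiction argument whose mathematical substance is the computation of the limit points of $\nabla_u \omega$ across the two discontinuity surfaces $\sigma(x) = 0$ and $x_3 = 0$ (the circle of radius $2|x_3^*|$, respectively the two vectors of norm $2\sigma(x^*)\sqrt{1 + \sigma(x^*)^2}$, together with the on-surface value $2\sigma(x^*)$). The only difference is presentational: the paper packages exactly these limit points into an upper semicontinuous set-valued mapping $G$ satisfying $0 \in G(x)$ if and only if $x = 0$, and then invokes upper semicontinuity at the limit $x^*$, whereas you carry out the same case analysis on the location of $x^*$ by hand.
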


\begin{proof}
Introduce a set-valued mapping $G \colon \mathbb{R}^3 \rightrightarrows \mathbb{R}^2$. Define 
$G(x) = \nabla_u \omega(x, u)$ if $\sigma(x) \ne 0$ and $x_3 \ne 0$, 
\begin{multline*}
  G(x) = \big\{ (2 x_1, 2 x_2)^T, (2 x_1 + 2 x_2 \sigma(x), 2 x_2 - 2 x_1 \sigma(x))^T, \\
  (2 x_1 - 2 x_2 \sigma(x), 2 x_2 + 2 x_1 \sigma(x))^T \big\}
\end{multline*}
if $x_3 = 0$, and $G(x) = \{ - 2|x_3| w \colon |w| = 1 \}$, if $\sigma(x) = 0$. By definition (see (\ref{BI_omega2})
and (\ref{BI_omega3})), one has $\nabla_u \omega(x, u) \in G(x)$ for any $x$. Furthermore, it is easy check that 
$0 \in G(x)$ if and only if $x = 0$.

As it was mentioned above, in the case $\sigma(x) \ne 0$ and $x_3 \ne 0$, the set of limit points of 
$\nabla_u \omega(x, u)$ (see (\ref{PartDer1}) and (\ref{PartDer2})) as $\sigma(x) \to 0$ is the circle of radius 
$2 |x_3|$ centred at the origin. Note also that in the same case the set of limit points of $\nabla_u \omega(x, u)$ as
$x_3 \to 0$ consists of two points: 
$$
  (2 x_1 + 2 x_2 \sigma(x), 2 x_2 - 2 x_1 \sigma(x))^T, \quad
  (2 x_1 - 2 x_2 \sigma(x), 2 x_2 + 2 x_1 \sigma(x))^T.
$$
Thus, in the case when $\sigma(x) = 0$ or $x_3 = 0$ the set $G(x)$ consists of $\nabla_u \omega(x, u)$ and all limit
points of $\nabla_u \omega(y, u)$ as $y \to x$. Therefore it is easy to verify that the set-valued mapping $G$ is upper
semicontinuous, i.e. for any $x$ and any open set $V$ such that $G(x) \subset V$ there exists $\delta > 0$ such that for
any $y$ with $|y - x| < \delta$ one has $G(y) \subset V$.

Arguing by reductio ad absurdum, suppose that assumption 4 does not hold true. Then there exists $\Delta > 0$, $r > 0$
and a sequence $\{ x^{(n)} \}$ such that
$$
  Q(x^{(n)}) \ge \Delta, \quad |x^{(n)}| \le r, \quad |\nabla_u \omega(x^{(n)}, u)| \le \frac{1}{n}.
$$
Consequently, there exists a subsequence, which we denote again by $\{ x^{(n)} \}$, converging to some 
$x^*$. Observe that $Q(x^*) \ge \Delta$ and $x^* \ne 0$ due to the facts that the function $Q$ is continuous and 
$Q(x) = 0$ iff $x = 0$, which implies $0 \notin G(x^*)$. Hence applying the upper semicontinuity of the set-valued
mapping $G$ one gets that there exists $a > 0$ and $\delta > 0$ such that
$$
  \inf_{y \in G(x)} |y| > a \quad \forall x \in \mathbb{R}^3 \colon |x - x^*| < \delta.
$$
Therefore for all sufficiently large $n$ one has $\inf\{|y| \colon y \in G(x^{(n)}) \} > a$, which contradicts the fact
that $|\nabla_u \omega(x^{(n)}, u)| \to 0$ as $n \to \infty$, since $\nabla_u \omega(x^{(n)}, u) \in G(x^{(n)})$ for 
all $n$.
\end{proof}

It remains to check that assumption 2 holds true, i.e. to verify that the closed-loop system 
(\ref{BI}), (\ref{ControlLaw}) has a solution for any initial data $x(0) \in \mathbb{R}^n \setminus C$ and 
$x(t) \notin C$ for all $t \in \mathbb{R}_+$. We prove a stronger assertion that, in particular, implies that for any 
$x(0) \notin C$ there exists a classical (i.e. continuously differentiable) solution of the closed-loop system.

\begin{proposition} \label{Prp_SolExist}
Let $\sigma(x(0)) \ne 0$. Then a solution $x(t)$ of the closed-loop system (\ref{BI}), (\ref{ControlLaw}) exists on
$\mathbb{R}_+$, $x(t) \notin C$ for all $t \ge 0$, and the control does not switch for any $t \in \mathbb{R}_+$
such that $x(t) \ne 0$. Thus, $x(t)$ is a classical solution of the system (\ref{BI}), (\ref{ControlLaw}) either on
$\mathbb{R}_+$ or on some finite time interval $[0, t_0)$, $t_0 > 0$. In the latter case, $x(t) \to 0$ as $t \to t_0$,
and $x(t)$ is an absolutely continuous solution of (\ref{BI}), (\ref{ControlLaw}) that is continuously differentiable on
$\mathbb{R}_+ \setminus \{ t_0 \}$.
\end{proposition}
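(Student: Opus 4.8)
The plan is to split according to the two branches of (\ref{ControlLaw}) that can be active when $\sigma(x(0))\neq 0$, namely $x_3(0)=0$ and $x_3(0)\neq 0$. If $x_3(0)=0$, I would use the branch $u=-\gamma\,\sigma(x)^{-1}(x_1,x_2)^T$ and compute directly $\dot x_3 = x_1u_2-x_2u_1 = 0$ and $\dot\sigma = (x_1u_1+x_2u_2)/\sigma = -\gamma$. Hence the trajectory stays on the punctured plane $\{x_3=0,\ \sigma\neq 0\}$, moves radially inward, and $\sigma(t)=\sigma(0)-\gamma t$ vanishes at $t_0=\sigma(0)/\gamma$, where $x(t_0)=0$. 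On $[0,t_0)$ this is a classical solution that never enters $C$ and along which the control never switches, and $x(t)\to 0$ as $t\to t_0$.

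The substantial case is $x_3(0)\neq 0$, i.e. $x(0)\in D:=\{x:\sigma(x)\neq0,\ x_3\neq0\}$. On $D$ the vector $\nabla_u\omega$ of (\ref{PartDer1}),(\ref{PartDer2}) is smooth and, as shown just before the proposition, nonzero, so the feedback $u(x)=-\gamma|\nabla_u\omega|^{-1}\nabla_u\omega$ is locally Lipschitz on $D$; standard ODE theory then yields a unique maximal classical solution $x(\cdot)$ on $[0,T)$ with values in $D$. As in the proof of Theorem~\ref{Thm_LinCase}, $Q(x(t))$ is nonincreasing, so $x(t)$ stays in the compact set $K=\{Q\le Q(x(0))\}$. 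A direct computation along the flow gives the two identities $\dot\sigma = 2\gamma(|x_3|-\sigma)/|\nabla_u\omega|$ and $\tfrac{d}{dt}|x_3| = 2\gamma\sigma^2(\sigma-2|x_3|)/|\nabla_u\omega|$, valid on $D$.

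These identities drive a running-minimum (barrier) argument, which I expect to be the crux. Since $\sigma>0$ on $D$, the condition $\dot\sigma\le0$ forces $|x_3|\le\sigma$, hence $|x|\le\sqrt2\,\sigma$; likewise $\tfrac{d}{dt}|x_3|\le0$ forces $\sigma\le2|x_3|$, hence $|x|\le\sqrt5\,|x_3|$. Suppose $\inf_{[0,T)}\sigma=0$. Choosing times $t_n$ at which $\sigma$ attains its running minimum on $[0,t_n]$ with $\sigma(t_n)\to0$, one has $\dot\sigma(t_n)\le0$, whence $|x(t_n)|\le\sqrt2\,\sigma(t_n)\to0$ and $Q(x(t_n))\to0$; monotonicity of $Q$ then gives $x(t)\to0$ as $t\to T$. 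The symmetric argument shows that $\inf_{[0,T)}|x_3|=0$ also forces $x(t)\to0$. This is exactly what keeps the trajectory off the forbidden axis $C$ and off the plane $\{x_3=0\}$ except at the origin. The delicate point is that the naive repelling estimate $\dot\sigma\to\gamma>0$ near $C$ degenerates at the origin, where $\sigma$ and $|x_3|$ vanish together, so a purely local ``$\sigma$ is increasing near $C$'' argument is insufficient; the running-minimum device is what treats both regimes uniformly.

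Finally I would conclude by the continuation dichotomy. If $T<\infty$, the escape theorem for the locally Lipschitz field on $D$, together with boundedness in $K$, gives $\dist(x(t),\partial D)=\min(\sigma(t),|x_3(t)|)\to0$, which by the previous paragraph forces $x(t)\to0$ as $t\to T=:t_0$; extending by $x\equiv0$ (so $u=0$) for $t\ge t_0$ produces a map that is Lipschitz, hence absolutely continuous, on $\mathbb{R}_+$, is classical on $[0,t_0)$ and on $(t_0,\infty)$, and therefore is continuously differentiable on $\mathbb{R}_+\setminus\{t_0\}$ with a corner at $t_0$ (where $|u|$ jumps from $\gamma$ to $0$). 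If instead $T=\infty$, then $x(\cdot)$ is classical on all of $\mathbb{R}_+$ and remains in $D$, and Theorem~\ref{Thm_LinCase} yields $Q(x(t))\to0$, i.e. $x(t)\to0$ as $t\to\infty$. In both cases $x(t)\notin C$ for every $t$ and the control does not switch while $x(t)\neq0$, as claimed.
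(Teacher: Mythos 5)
Your proof is correct, and although its skeleton coincides with the paper's --- the case split on $x_3(0)=0$ versus $x_3(0)\ne 0$, the maximal classical solution on the open region where the fourth branch of (\ref{ControlLaw}) is smooth, boundedness via monotonicity of $Q$, the two closed-loop identities for $\dot\sigma$ and $\tfrac{d}{dt}|x_3|$ (which are exactly (\ref{clloop_eq}), written so as to cover both signs of $x_3$), and the extension by zero after the trajectory reaches the origin --- the crux is handled by a genuinely different device. The paper assumes one of the two quantities tends to zero at a finite escape time and then runs an $\varepsilon$-barrier argument: if $x_3\to 0$, then $\sigma$ must drop below any $\varepsilon$ (otherwise $\dot x_3>0$ near $t_0$) and can never re-cross the level $\varepsilon$ (since $\dot\sigma<0$ in the band $\varepsilon/2<\sigma<\varepsilon$), whence $\sigma\to 0$ as well. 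You instead exploit the sign structure at running minima: wherever $\dot\sigma\le 0$ one has $|x_3|\le\sigma$, hence $|x|\le\sqrt2\,\sigma$, so along minimizing times $|x(t_n)|\to 0$, and monotonicity of $Q$ propagates this to $x(t)\to 0$; symmetrically for $|x_3|$. Your route buys something real: it needs only $\inf\sigma=0$ or $\inf|x_3|=0$, which the escape lemma delivers directly, whereas the paper's argument rests on the stronger, not explicitly justified, assertion that one of the two functions actually has limit zero --- the escape lemma alone gives only $\min(\sigma,|x_3|)\to 0$ and does not a priori exclude the two quantities alternating. In exchange, the paper's barrier argument yields the finer equivalence ``$x_3\to 0$ iff $\sigma\to 0$'' describing the coupled collapse. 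Two minor points: phrase the choice of $t_n$ as a minimizer of $\sigma$ over $[0,s_n]$ for a sequence $s_n$ with $\sigma(s_n)\to 0$ (your ``running minimum on $[0,t_n]$'' is circular as written, and you should note $t_n>0$ for large $n$ so that the minimizer gives $\dot\sigma(t_n)\le 0$); and the final appeal to Theorem~\ref{Thm_LinCase} in the case $T=\infty$ is unnecessary for the proposition and mildly circular, since assumption 2 of that theorem is precisely what this proposition establishes.
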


\begin{proof}
Let $x_3(0) = 0$. Then the closed-loop system takes the form
$$
  \dot{x}_1 = - \gamma x_1 / \sigma(x), \quad
  \dot{x}_2 = - \gamma x_2 / \sigma(x), \quad
  \dot{x}_3 = 0.
$$
Hence, obviously, a solution of this system exists on $\mathbb{R}_+$, $x_3(t) = 0$ for any $t \in \mathbb{R}_+$,
and the control does not switch.

Let, now, $x_3(0) \ne 0$. Then the closed-loop system takes the form
\begin{gather}
  \dot{x}_1 = - \gamma |\nabla_u \omega(x, u)|^{-1} \frac{\partial \omega}{\partial u_1}(x, u), \quad 
  \dot{x}_2 = - \gamma |\nabla_u \omega(x, u)|^{-1} \frac{\partial \omega}{\partial u_2}(x, u), \label{CLSys1} \\ 
  \dot{x}_3 = - \gamma |\nabla_u \omega(x, u)|^{-1} \left( x_1 \frac{\partial \omega}{\partial u_2}(x, u) - x_2
  \frac{\partial \omega}{\partial u_1}(x, u)  \right), \label{CLSys2}
\end{gather}
where $\nabla_u \omega(x, u)$ has the form (\ref{PartDer1}), (\ref{PartDer2}). Clearly, a continuously differentiable
solution $x(t)$ of the system (\ref{CLSys1}), (\ref{CLSys2}) exists at least on some finite time interval. Denote by
$[0, t_0)$ the maximal interval of existence of this solution. Note that $x(t)$ is bounded on $[0, t_0)$ by virtue of
the facts that $Q$ is radially unbounded, and by the definition of the control law (\ref{GenContrL}) one has
$$
  \frac{d}{dt} Q(x(t), t) \le - \gamma |\nabla_u \omega(x, u)| < 0 \quad \forall t \in [0, t_0).
$$
Therefore, either $t_0 = +\infty$ and, thus, $x(t)$ is a continuously
differentiable solution of the closed-loop system that is defined and bounded on $\mathbb{R}_+$, and the control does
not switch, or at least one of the functions $x_3(t)$ and $\sigma(x(t))$ tends to zero as $t \to t_0$.

Let us show that $x_3(t) \to 0$ as $t \to t_0$ iff $\sigma(x(t)) \to 0$ as $t \to t_0$. In other words,
$t_0 < +\infty$ if and only if the trajectory $x(t)$ reaches the origin at $t = t_0$, which yields the required result.
Note also that if $t_0$ is finite, and $x(t) \to 0$ as $t \to t_0$, then the control switches to zero at time $t = t_0$
and $x(t) \equiv 0$ for any $t \ge t_0$. Thus, the solution $x(t)$ of the closed-loop system (\ref{BI}),
(\ref{ControlLaw}) is absolutely continuous on $\mathbb{R}_+$, and continuously differentiable on 
$\mathbb{R}_+ \setminus \{ t_0 \}$.

Let $x_3(0) > 0$, and suppose that $t_0 < +\infty$ and $x_3(t) \to 0$ as $t \to t_0$. The cases when $x_3(0) < 0$ or 
$\sigma(x(t)) \to 0$ as $t \to t_0$ can be considered in the same way. 

It is clear that $x_3(t) > 0$ for all $t \in [0, t_0)$. Choose $\varepsilon > 0$.  Then there exists $\delta > 0$ such
that $0 < x_3(t) < \varepsilon / 2$ for any $t \in [t_0 - \delta, t_0)$. Observe that for the closed-loop system one has
\begin{equation} \label{clloop_eq}
  \dot{x}_3 = - \frac{2 \gamma \sigma^2(x)}{|\nabla_u \omega(x, u)|} (2 x_3 - \sigma(x)), \quad
  \frac{d}{dt} \sigma(x) = - \frac{2 \gamma}{|\nabla_u \omega(x, u)|} (\sigma(x) - x_3).
\end{equation}
Therefore there exists $s \in [t_0 - \delta, t_0)$ such that $\sigma(s) < \varepsilon$, because otherwise from
(\ref{clloop_eq}) it follows that $\dot{x}_3(t) > 0$ on $[t_0 - \delta, t_0)$ or, equivalently, the function $x_3$ is
strictly increasing on $[t_0 - \delta, t_0)$, which contradicts the fact that $x_3(t) \to 0$ as $t \to t_0$.

Let us check that $\sigma(x(t)) < \varepsilon$ for any $t \in [s, t_0)$. Arguing by reductio ad absurdum, suppose that
there exists $\overline{t} \in (s, t_0)$ such that $\sigma(x(\overline{t})) \ge \varepsilon$.
Denote 
$$
  \tau = \inf \big\{ t \in (s, t_0) \colon \sigma(x(t)) = \varepsilon \big\}.
$$
Clearly, $\tau > s$, $\sigma(x(\tau)) = \varepsilon$ and for any $t \in [s, \tau)$ one has $\sigma(x(t)) < \varepsilon$.
Hence due to the continuity of $\sigma(x(t))$ there exists $\xi \in [s, \tau)$ such that 
$\varepsilon / 2 < \sigma(x(t)) < \varepsilon$ for all $t \in (\xi, \tau)$. Therefore with the use of (\ref{clloop_eq})
one gets that $\dot{\sigma}(x(t)) < 0$ on $(\xi, \tau)$, which implies that $\sigma(x(t))$ is strictly decreasing on
$(\xi, \tau)$ and, thus, $\sigma(x(\tau)) < \varepsilon$, which contradicts the definition of $\tau$. Hence
$\sigma(x(t)) < \varepsilon$ for any $t \in [s, t_0)$, which implies that $\sigma(x(t)) \to 0$ as $t \to t_0$, since
$\varepsilon > 0$ is arbitrary.
\end{proof}

Thus, the designed control law (\ref{ControlLaw}) stabilizes the Brockett integrator for any $\gamma > 0$ and any
initial point $x(0)$ that does not lie on the $x_3$-axis. Moreover, for any such initial point the control does not
switch and, thus, is continuous along the solutions of the closed-loop system. Therefore for any $x(0) \notin C$ there
exists a unique classical (i.e. continuously differentiable) solution of the closed-loop system. Finally, according to 
Remark~2 the maximum value of the control can be made arbitrarily small.

\noindent{\em Remark~5}.~Let us discuss the case when when the initial point lies on the $x_3$-axis. According to the
control algorithm(\ref{ControlLaw}) one chooses $u(x(0)) = \gamma v$ for an arbitrary $v \in \mathbb{R}^2$ such that
$|v| = 1$. The closed-loop system at this point takes the form
$$
  \dot{x}_1 = \gamma v_1 \quad
  \dot{x}_2 = \gamma v_2 \quad
  \dot{x}_3 = 0.
$$
Thus, according to the algorithm, the control input ``pushes the point off the $x_3$-axis'', and then the control
switches. However, it is not obvious whether an absolutely continuous solution of the closed-loop system (\ref{BI}),
(\ref{ControlLaw}) with the initial point $x(0)$ lying on the $x_3$-axis exists. If such a solution exists, then the
proposed control law (\ref{ControlLaw}) stabilizes the Brockett integrator for an arbitrary initial data.

Note that the set of limit points of the control law (\ref{ControlLaw}) as $\sigma(x) \to 0$ is the circle with radius
$\gamma$ centred at the origin, and $u(x) \to \gamma v$ as $\sigma(x) \to 0$, if the limit is taken along the ray 
$(\alpha v_1, \alpha v_2, x_3)$, $\alpha \ge 0$. Therefore, it is natural to expect that a solution of the closed-loop
system (\ref{BI}), (\ref{ControlLaw}) exists. However, the proof of the existence of a solution is outside the scope of
this article and is left for future research.

\subsection{Simulation}

Simulation of the closed-loop system with the following parameters was performed: $\gamma = 0.1$ and 
$x(0) = (0.2, 0.2, 0.2)$. Simulation results demonstrate convergence of the trajectory to the origin, 
see Fig.~\ref{Fig_BI} below.
\begin{figure}[h]
  \includegraphics[scale=0.65]{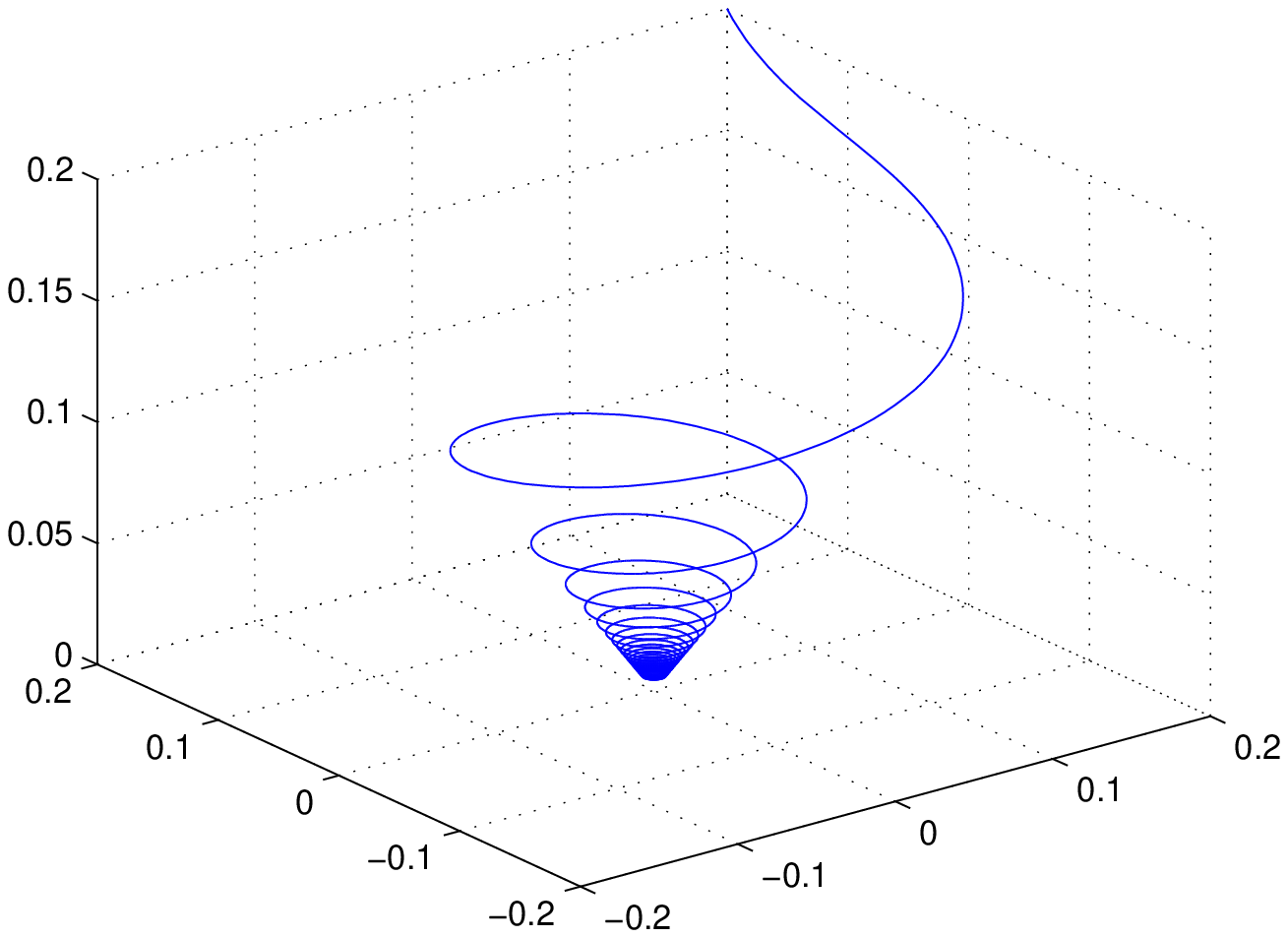}
  \includegraphics[scale=0.65]{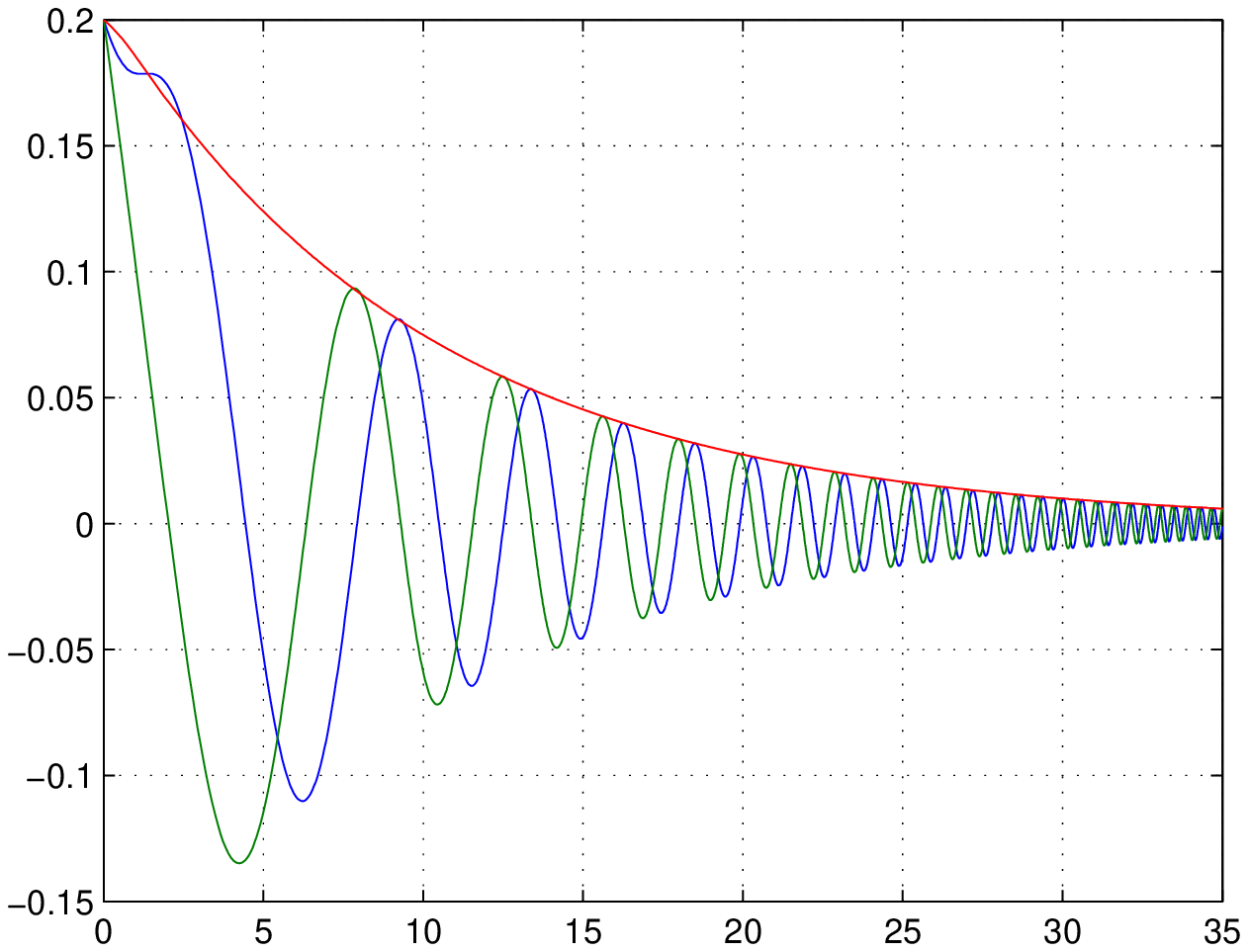}
  \caption{Simulation results of the closed-loop system}
  \label{Fig_BI}
\end{figure}

\section{Energy Control of a Vibrating String}
\label{Section_VibrString}

Theorem~\ref{Thm_LinCase} furnishes sufficient conditions for the convergence of the nonsmooth Speed-Gradient
algorithm~(\ref{PseudoSG}). However, in some important examples assumption 4 of this theorem is invalid. In this
section, we present such example, and demonstrate that even in this case one can prove that the control goal
$$
  \lim_{t \to \infty} Q(x(t), t) = 0
$$
is achieved with the use the Krasovskii-LaSalle invariance principle.

Consider an undamped vibrating string (see, e.g., \cite{Tufillaro,Bolwell}). The equation of motion of this string can
be written in the form
\begin{equation} \label{VibrString}
  \ddot{r} + \omega_0^2 (1 + K |r|^2) r = u,
\end{equation}
where $r = (x, y)$ is the displacement of an element of the string in the $xy$-plane that is perpendicular to
the string, $u = (u_1, u_2) \in \mathbb{R}^2$ represents the forcing term, $K > 0$ is a nonlinear coefficient that takes
into account the finite stretching of the string, and $\omega_0 = k \sqrt{T_0 / \mu}$ with $T_0$ being the average
tension of the string, $\mu$ its linear mass density and $k = 2 \pi / \lambda$, where $\lambda$ is the wavelength.

The system (\ref{VibrString}) can be written in the form
\begin{equation} \label{VibStr}
    \dot{q} = p, \quad
    \dot{p} = - \omega_0^2 \big( 1 + K (q_1^2 + q_2^2) \big) q + u,
\end{equation}
where $q = (q_1, q_2) = r$ and $p = (p_1, p_2) = \dot{r}$. The Hamiltonian for the system (\ref{VibStr}) has the form
$$
  H(q, p) = \frac{1}{2} (p_1^2 + p_2^2) + \frac{\omega_0^2}{2} (q_1^2 + q_2^2) + 
  \frac{\omega_0^2}{4} K (q_1^2 + q_2^2)^2.
$$
We pose the control problem as finding the control law $u = u(q, p)$, which ensures the objective
\begin{equation} \label{EnergyContGoal}
  H(q, p) \to H^* \quad \text{as} \quad t \to +\infty,
\end{equation}
where $H^* \ge 0$ is prespecified. Thus, the control objective is to reach a required energy level $H^*$. 

Introduce the following nonsmooth goal function
\begin{equation} \label{VibrStr_GoalFunc}
  Q(q, p) = |H(q, p) - H^*|.
\end{equation}
It is easy to see that the function $Q$ is locally Lipschitz continuous and Hadamard directionally differentiable.
For any $h \in \mathbb{R}^4$ its directional derivative has the form
\begin{multline*}
  Q'(q, p; h) = \sign(H(q, p) - H^*) \Big( \omega_0^2 \big( 1 + K (q_1^2 + q_2^2) \big) q_1 h_1 \\
  + \omega_0^2 \big( 1 + K (q_1^2 + q_2^2) \big) q_2 h_2 + p_1 h_3 + p_2 h_4 \Big)
\end{multline*}
in the case $H(q, p) \ne H^*$, and
$$
  Q'(q, p; h) = \Big| \omega_0^2 \big( 1 + K (q_1^2 + q_2^2) \big) q_1 h_1 + 
  \omega_0^2 \big( 1 + K (q_1^2 + q_2^2) \big) q_2 h_2 + p_1 h_3 + p_2 h_4 \Big|
$$
in the case $H(q, p) = H^*$. If $H(q, p) \ne H^*$, then the function $Q$ is differentiable, and we define
$$
  \omega(q, p, u) = Q'(q, p)^T F(q, p, u) = \sign(H(q, p) - H^*) \big( p_1 u_1 + p_2 u_2 \big),
$$
where $F(q, p, u)$ is the right-hand side of (\ref{VibStr}). In the case $H(q, p) = H^*$, there is no linear in $u$
function $\omega(q, p, u)$ such that
$$
  Q'(q, p; F(q, p, u)) \le \omega(q, p, u) \quad \forall q, p, u \in \mathbb{R}^2.
$$
However, taking into account Remark~3 we define $\omega(q, p, u) \equiv 0$ for any $q$ and $p$ such that 
$H(q, p) = H^*$. Then according to the nonsmooth Speed-Gradient algorithm we define the control law as follows
$$
  u = - \gamma \nabla_u \omega(q, p, u).
$$
Thus, the control law has the form
\begin{equation} \label{EnergyControlLaw}
  u(q, p) = - \gamma \sign(H(q, p) - H^*) p,
\end{equation}
where $\sign(0) = 0$.

It is easy to see that assumption~4 of Theorem~\ref{Thm_LinCase} is not satisfied in the example under consideration.
Therefore, one cannot apply Theorem~\ref{Thm_LinCase} in order to prove the achievement of the control goal
(\ref{EnergyContGoal}). However, the following result holds true.

\begin{proposition}
For any $\gamma > 0$ and $q(0), p(0) \in \mathbb{R}^2$ such that $|q(0)| + |p(0)| \ne 0$ and $H(q(0), p(0)) \ne H^*$
there exist an absolutely continuous solution $(q(t), p(t))$ of the system (\ref{VibStr}), (\ref{EnergyControlLaw}) that
is defined and bounded on $\mathbb{R}_+$, and the control goal (\ref{EnergyContGoal}) is achieved. Moreover, if 
$H^* \ne 0$, then there exists $T > 0$ such that
\begin{equation} \label{ControlGoal_VibStr}
  H(q(t), p(t)) \to H^* \quad \text{as} \quad t \to T,
\end{equation}
i.e. the control goal is achieved in finite time.
\end{proposition}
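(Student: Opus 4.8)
The plan is to reduce the whole analysis to the scalar energy $H$ and its deviation $e = H - H^*$. First I would differentiate $H$ along the closed-loop motion. Since $\nabla_q H = \omega_0^2(1 + K(q_1^2+q_2^2))q$ and $\nabla_p H = p$, the contributions of the free dynamics cancel and one is left with $\dot H = p^T u$; substituting the feedback (\ref{EnergyControlLaw}) gives $\dot H = -\gamma\sign(H-H^*)|p|^2$. Hence for the goal function (\ref{VibrStr_GoalFunc}) one has $\frac{d}{dt}Q = \sign(H-H^*)\dot H = -\gamma|p|^2 \le 0$ for almost every $t$, so $Q$ is nonincreasing along any solution. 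As $H$ is radially unbounded, the resulting bound $H(q(t),p(t)) \le H^* + Q(q(0),p(0))$ confines the trajectory to a fixed compact sublevel set of $H$, which already yields the asserted boundedness. The feedback (\ref{EnergyControlLaw}) is discontinuous only across the level set $\{H = H^*\}$; away from it the closed-loop field is smooth, namely a damped oscillator $\dot p = -\omega_0^2(1+K|q|^2)q - \gamma p$ on $\{H > H^*\}$ and its anti-damped counterpart (with $+\gamma p$) on $\{H < H^*\}$.

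Next I would construct the solution explicitly. Because $H(q(0),p(0)) \ne H^*$, the sign is locally constant, so near the initial data the closed-loop field is locally Lipschitz and a classical solution exists; by the previous paragraph it stays in a compact set, hence extends to the whole maximal interval $[0,t_0)$ on which $H \ne H^*$. Two outcomes are possible. If the solution reaches $H = H^*$ at a finite time $T$, the control vanishes there and the undamped oscillator conserves $H$; applying the Lemma of Section~\ref{NonSm_SGA} — whose hypothesis holds here, since $\nabla_u\omega = \sign(H-H^*)p$ makes the acute-angle product equal to $-|p|^2 \le 0$ whenever $Q > 0$ — gives $Q(q(t),p(t)) = 0$ for all $t \ge T$. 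The solution then continues on $[T,+\infty)$ as the energy-preserving oscillator, so it is defined and absolutely continuous on $\mathbb{R}_+$ (indeed $C^1$ off $t = T$). If instead $H$ never equals $H^*$, the solution is already classical on all of $\mathbb{R}_+$.

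It remains to prove the limit relation, and the Krasovskii--LaSalle principle carries this step, which is also the principal difficulty. Suppose $H^* > 0$ and $H(q(0),p(0)) > H^*$. Were $H(t) > H^*$ for all $t$, the damped dynamics would persist forever on the compact sublevel set $\{H \le H(q(0),p(0))\}$; LaSalle with $V = H$ then sends the trajectory to the largest invariant subset of $\{\dot H = 0\} = \{p = 0\}$, and on $\{p = 0\}$ invariance forces $\dot p = -\omega_0^2(1+K|q|^2)q = 0$, hence $q = 0$, so that subset is the origin and $H(t) \to 0$. This contradicts $H(t) > H^* > 0$; therefore $H$ meets the level $H^*$ at a finite $T$, and the persistence step above yields the finite-time statement (\ref{ControlGoal_VibStr}). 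The case $H(q(0),p(0)) < H^*$ is symmetric: while $H < H^*$ the trajectory remains in the compact set $\{H \le H^*\}$, and were it trapped there, LaSalle applied to $V = -H$ for the anti-damped field would again force $H(t) \to 0$, which is impossible because $\dot H \ge 0$ keeps $H(t) \ge H(q(0),p(0)) > 0$; hence $H$ reaches $H^*$ in finite time. Finally, if $H^* = 0$ then necessarily $H(q(0),p(0)) > 0$, the damped dynamics hold for all $t$, and the same invariance argument gives $H(t) \to 0 = H^*$, now only asymptotically — which is precisely why finite-time convergence is claimed solely for $H^* \ne 0$. The delicate point throughout is justifying the LaSalle step: one must verify that the closed-loop trajectory stays in a compact, forward-invariant set and that the positivity $\omega_0^2(1+K|q|^2) > 0$ collapses the invariant subset of $\{p = 0\}$ to the origin.
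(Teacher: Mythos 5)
Your proposal is correct and follows essentially the same route as the paper's own proof: the same computation $\dot H = -\gamma\sign(H-H^*)|p|^2$, the same boundedness argument, the same split into the damped regime ($H>H^*$) and anti-damped regime ($H<H^*$), and the same Krasovskii--LaSalle step in each case (your $V=-H$ is the paper's $V=H^*-H$ up to an additive constant), ending in the same contradiction with $H \ge H(q(0),p(0)) > 0$. The delicate invariance point you flag at the end is resolved in the paper exactly as your argument requires: LaSalle is applied not to the (non-invariant) sublevel set $\{H \le H^*\}$ but to the closure of the bounded trajectory itself, which is a compact invariant set on which $\dot V \le 0$.
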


\begin{proof}
Note that a solution $(q(t), p(t))$ of the system (\ref{VibStr}), (\ref{EnergyControlLaw}) exists at least on some
finite time interval. Furthermore, from the equality
$$
  \frac{d}{dt} H(q(t), p(t)) = - \gamma \sign(H(q(t), p(t)) - H^*) |p|^2
$$
it follows that $H(q(t), p(t)) \le \max\{ H(q(0), p(0)), H^* \}$, which implies that $(q(t), p(t))$ is bounded. Hence
either $H(q(t), p(t)) \ne H^*$ for all $t$, and $(q(t), p(t))$ is a continuously differentiable solution of the system
(\ref{VibStr}), (\ref{EnergyControlLaw}) that is defined and bounded on $\mathbb{R}_+$ or there exists some $T \ge 0$
such that $H(q(t), p(t)) \to H^*$ as $t \to T$. In the latter case, the control switches to zero at time $t = T$, and 
$(q(t), p(t))$ coincides with a solution of the system (\ref{VibStr}) with $u = 0$, since the total energy $H(q, p)$ is
conserved along solutions of the unforced system (\ref{VibStr}). Thus, $(q(t), p(t))$ is an absolutely continuous
solution $(q(t), p(t))$ of the system (\ref{VibStr}), (\ref{EnergyControlLaw}) that is defined and bounded on
$\mathbb{R}_+$. It remains to show that the control goal (\ref{EnergyContGoal}) is achieved, and in the case $H^* \ne 0$
there exists $T > 0$ such that $H(q(t), p(t)) \to H^*$ as $t \to T$.

Suppose, at first, that $H(q(0), p(0)) > H^*$. Then the closed-loop system takes the form
\begin{equation} \label{VibStr_FirstCase}
  \dot{q} = p, \quad
  \dot{p} = - \omega_0^2 \big( 1 + K (q_1^2 + q_2^2) \big) q - \gamma p.
\end{equation}
Clearly, there exists a unique solution $(q_0(t), p_0(t))$ of the system (\ref{VibStr_FirstCase}) satisfying 
$q_0(0) = q(0)$ and $p_0(0) = p(0)$ that is defined and bounded on $\mathbb{R}_+$ by virtue of the fact that
$$
  \frac{d}{dt} H(q_0(t), p_0(t)) = - \gamma |p_0(t)|^2 \le 0 \quad \forall t \in \mathbb{R}_+.
$$
Applying the Krasovskii-LaSalle invariance principle to the system (\ref{VibStr_FirstCase}) with $H(q, p)$ as a
Lyapunov function one obtains that $(q_0(t), p_0(t)) \to (0, 0)$ as $t \to \infty$. Consequently, 
$H(q_0(t), p_0(t)) \to 0$ as $t \to \infty$. Therefore $H(q(t), p(t)) \to 0$ as $t \to \infty$ in the case $H^* = 0$,
and $H(q(t), p(t)) \to H^*$ as $t \to T$ for some $T > 0$ in the case $H^* > 0$, where $(q(t), p(t))$ is a solution of
the closed-loop system (\ref{VibStr}), (\ref{EnergyControlLaw}).

Suppose, now, that $H(q(0), p(0)) < H^*$. Then the closed-loop system takes the form
\begin{equation} \label{VibStr_SecondCase}
  \dot{q} = p, \quad
  \dot{p} = - \omega_0^2 \big( 1 + K (q_1^2 + q_2^2) \big) q + \gamma p.
\end{equation}
Denote by $(q_0(t), p_0(t))$ a unique solution of the system (\ref{VibStr_SecondCase}) satisfying $q_0(0) = q(0)$ and
$p_0(0) = p(0)$ that exists at least on some finite time interval. Also, denote by $[0, t_0)$ the maximal interval of
existence of this solution. Observe that
\begin{equation} \label{LowerEstimate_VibrStr}
  \frac{d}{dt} H(q_0(t), p_0(t)) = \gamma |p_0(t)|^2 \ge 0 \quad \forall t \in [0, t_0).
\end{equation}
Hence the function $H(q_0(\cdot), p_0(\cdot))$ is nondecreasing.

Note that if $t_0 < + \infty$, then $H(q_0(t), p_0(t)) \to \infty$ as $t \to t_0$. Therefore, either there exists
$T \in (0, t_0)$ such that $H(q_0(t), p_0(t)) < H^*$ for all $t \in [0, T)$ and $H(q_0(T), p_0(T)) = H^*$ or 
$$
  \lim_{t \to t_0} H(q_0(t), p_0(t)) < H^*.
$$
In the former case one has $H(q(t), p(t)) \to H^*$ as $t \to T$, where $(q(t), p(t))$ is a solution of (\ref{VibStr}),
(\ref{EnergyControlLaw}), while in the latter case one has $t_0 = + \infty$ and $H(q_0(t), p_0(t)) < H^*$ for all 
$t \ge 0$. Let us show that the latter case is impossible.

Indeed, denote 
$$
  D = \{ (q_0(t), p_0(t)) \colon t \ge 0 \}.
$$
Clearly, $D$ is a bounded invariant set of the system (\ref{VibStr_SecondCase}). Observe that the function 
$V(q, p) = H^* - H(q, p)$ is nonnegative and continuous on the set $D$, and its derivative along solutions of the system
(\ref{VibStr_SecondCase}) has the form
$$
  \frac{d}{dt} V(q, p) = - \gamma |p|^2 \le 0.
$$
Hence $V$ is a Lyapunov function of the system
(\ref{VibStr_SecondCase}) on the invariant set $D$. Therefore, by the Krasovskii-LaSalle invariance principle
(see~\cite{LaSalle}, Theorem~6.4) any solution of (\ref{VibStr_SecondCase}) starting in $D$ converges to the largest
invariant set of (\ref{VibStr_SecondCase}) in the set
$$
  E = \Big\{ (q, p) \in \cl D \colon \frac{d}{dt} V(q, p) = 0 \Big\},
$$
where $\cl D$ is the closure of the set $D$. Note that $d V(q, p) / dt = 0$ iff $p = 0$, and the only invariant
set of the system (\ref{VibStr_SecondCase}) contained in the set $\{ (q, p) \in \mathbb{R}^4 \colon p = 0 \}$ is the
equilibrium point $(0, 0)$. Therefore any solution of (\ref{VibStr_SecondCase}) starting in $D$ must converge to the
origin. In particular, one has $(q_0(t), p_0(t)) \to (0, 0)$ as $t \to \infty$. However, from the estimate
(\ref{LowerEstimate_VibrStr}) and the fact that $|q(0)| + |p(0)| \ne 0$ it follows that
$$
  H(q_0(t), p_0(t)) \ge H(q(0), p(0)) > 0,
$$
which contradicts the fact that $(q_0(t), p_0(t)) \to (0, 0)$ as $t \to \infty$. Thus, the case when 
$H(q_0(t), p_0(t)) < H^*$ for all $t \in \mathbb{R}_+$ is impossible, which completes the proof.
\end{proof}

\noindent{\em Remark~6}.~Note that one can use the smooth goal function
$$
  Q(q, p) = \frac{1}{2} \big( H(q, p) - H^* \big)^2
$$
instead of the nonsmooth goal function~(\ref{VibrStr_GoalFunc}), and apply the standard Speed-Gradient algorithm in
order to design a control law for the problem under consideration. However, there are no results on the finite-time
convergence of the smooth Speed-Gradient algorithm. In contrast, the use of the nonsmooth Speed-Gradient algorithm
allows one to guarantee finite-time convergence to any nonzero energy level.

\section{Conclusion}

It is well known that relaxation of smoothness condition may greatly improve performance of the control systems (recall
variable structure systems). However, many specific problems are still to be examined. The contribution of this paper
is twofold. On the one hand, we propose a nonsmooth extension of the Speed-Gradient algorithms that, in turn, extend
the classical $L_gV$ control. On the other hand, we present yet another almost global stabilizer for Brockett
integrator. Its additional features are possibility of stabilization with arbitrarily small control level and
continuity of the control along trajectories of the closed-loop system.

An avenue for further research is testing nonsmooth Speed-Gradient algorithms for various nonlinear control problems.

\bibliographystyle{abbrv}  
\bibliography{BI_bibl}

\end{document}